\newtheorem{theorem}{Theorem}
\newtheorem{definition}{Definition}
\newtheorem{corollary}{Corollary}
\begin{document}

\author{Amiran Gogatishvili, Ushangi Goginava and George Tephnadze}
\title[Classes Of Functions]{Relations Between Some $\,$Classes Of Functions
Of Generalized Bounded Variation}
\address{A. Gogatishvili, Institute of Mathematics of the Academy of
Sciences of the Czech Republic, Zinta 25, 11567 Praha 1, Czech Republic}
\email{gogatish@math.cas.cz}
\address{U. Goginava, Department of Mathematics, Faculty of Exact and
Natural Sciences, Iv. Javakhishvili Tbilisi State University, Chavcha\-vadze
str. 1, Tbilisi 0128, Georgia}
\email{zazagoginava@gmail.com}
\address{G. Tephnadze, Departmentof Mathematics, Faculty of Exact and
Natural Sciences, Iv. Javakhishvili Tbilisi State University, Chavcha\-vadze
str. 1, Tbilisi 0128, Georgia}
\email{giorgitephnadze@gmail.com}
\date{}
\title{Relations Between Some $\,$Classes Of Functions
Of Generalized Bounded Variation}

\keywords{Waterman's class, generalized Wiener's class}
\subjclass[2000]{26A45}
\thanks{The reseach was supported by Shota Rustaveli National Science Foundation
grant no.13/06 (Geometry of function spaces, interpolation and embedding
theorems)\\The research of A. Gogatishvili was partly supported by the grants 201/08/0383 and 13-14743S
of the Grant agency of the Czech Republic. }

\begin{abstract}
We prove inclusion relations between generalizing Waterman's and generalized
Wiener's classes  for functions of two variable.
\end{abstract}

\maketitle

The notion of function of bounded variation was introduced by C. Jordan \cite%
{12}. Generalized this notion N. Wiener \cite{24} has considered the class $%
BV_{p}$ of functions. L. Young \cite{25} introduced the notion of functions
of $\Phi $-variation. In \cite{20} D. Waterman has introduced the following
concept of generalized bounded variation

\begin{definition}
Let $\Lambda =\left\{ \lambda _{n}:n\geq 1\right\} \,\ $be an increasing
sequence of positive numbers such that $\sum\limits_{n=1}^{\infty }\left(
1/\lambda _{n}\right) =\infty .$ A function $f$ is said to be of $\Lambda $%
-bounded variation $\left( f\in \Lambda BV\right) $, if for every choice of
nonoverlapping intervals $\{I_{n}:n\geq 1\}$, we have
\begin{equation*}
\sum\limits_{n=1}^{\infty }\frac{\left\vert f\left( I_{n}\right) \right\vert
}{\lambda _{n}}<\infty ,
\end{equation*}%
where $I_{n}=\left[ a_{n},b_{n}\right] \subset \left[ 0,1\right] $ and $%
f\left( I_{n}\right) =f\left( b_{n}\right) -f\left( a_{n}\right) .$
\end{definition}

If $\,\,f\in \Lambda BV,$ then $\Lambda $-variation of $f$ is defined to be
the supremum of such sums, denoted by $V_{\Lambda }\left( f\right) .$

Properties of functions of the class $\Lambda BV$ as well as the convergence
and summability properties of their Fourier series have been investigated in
\cite{17}-\cite{23}.

For everywhere bounded 1-periodic functions, Z. Chanturia \cite{6} has
introduced the concept of the modulus of variation.

H. Kita and K. Yoneda \cite{13} studied generalized Wiener classes $BV\left(
p\left( n\right) \uparrow p\right) .$ They introduced

\begin{definition}
Let $f$ be a finite $1$-periodic function defined on the interval $\left(
-\infty ,+\infty \right) $. $\Delta $ is said to be a partition with period $%
1$ if there is a set of points $t_{i}$ for which
\begin{equation}
\cdots t_{-1}<t_{0}<t_{1}<t_{2}<\cdots <t_{m}<t_{m+1}<\cdots ,  \label{1}
\end{equation}%
and $t_{k+m}=t_{k}+1$ when $k=0,\pm 1,\pm 2,...$, where $m$ is any natural
number. Let $p\left( n\right) $ be an increasing sequence such that $1\leq
p\left( n\right) \uparrow p$, $n\rightarrow \infty $, where $1\leq p\leq
+\infty $. We say that a function $f$ belongs to the class $BV\left( p\left(
n\right) \uparrow p\right) $ if
\begin{equation*}
V\left( f,p\left( n\right) \uparrow p\right) \equiv \sup\limits_{n\geq
1}\sup\limits_{\Delta }\left\{ \left( \sum\limits_{k=1}^{m}\left\vert
f\left( I_{k}\right) \right\vert ^{p\left( n\right) }\right) ^{1/p\left(
n\right) }:\inf\limits_{k}\left\vert I_{k}\right\vert \geq \frac{1}{2^{n}}%
\right\} <+\infty .
\end{equation*}
\end{definition}

We note that if $p\left( n\right) =p$ for each natural number, where $1\leq
p<+\infty $, then the class $BV\left( p\left( n\right) \uparrow p\right) $
coincides with the Wiener class $V_{p}$.

Properties of functions of the class $BV\left( p\left( n\right) \uparrow
p\right) $ as well as the uniform convergence and divergence at point of
their Fourier series with respect to trigonometric and Walsh system have
been investigated in \cite{8},\cite{10},\cite{14}.

Generalizing the class $BV\left( p\left( n\right) \uparrow p\right) $ T.
Akhobadze (see \cite{1},\cite{2}) has considered the $BV\left( p\left(
n\right) \uparrow p,\varphi \right) $ and$\,$ $B\Lambda \left( p\left(
n\right) \uparrow p,\varphi \right) $ classes of functions.

The relation between diferent classes of generalized bounded variation was
taken into account in the works of M. Avdispahic \cite{4}, A. Kovocik \cite%
{15}, A. Belov (see \cite{5}, Z. Chanturia \cite{7}), T. Akhobadze \cite{3}
and M. Medvedieva \cite{16}, Goginava \cite{9,AMH}.

Let $f$ be a real and measurable function of two variable of period $1$ with
respect to each variable. Given intervals $J_{1}=(a,b)$, $J_{2}=(c,d)$ and
points $x,y$ from $I:=[0,1]$ we denote
\begin{equation*}
f(J_{1},y):=f(b,y)-f(a,y),\qquad f(x,J_{2}):=f(x,d)-f(x,c)
\end{equation*}%
and for the rectangle $A=(a,b)\times (c,d)$, we set
\begin{equation*}
f(A)=f(J_{1},J_{2}):=f(a,c)-f(a,d)-f(b,c)+f(b,d).
\end{equation*}%
Let $E=\{I_{i}\}$ be a collection of nonoverlapping intervals from $I$
ordered in arbitrary way and let $\Omega $ be the set of all such
collections $E$.

For the sequence of positive numbers $\Lambda =\{\lambda
_{n}\}_{n=1}^{\infty }$ we denote
\begin{equation*}
\Lambda V_{1}(f)=\sup_{y\in I}\sup_{\{I_{i}\}\in \Omega }\sum_{i}\frac{%
|f(I_{i},y)|}{\lambda _{i}},
\end{equation*}%
\begin{equation*}
\Lambda V_{2}(f)=\sup_{x\in I}\sup_{\{J_{j}\}\in \Omega }\sum_{j}\frac{%
|f(x,J_{j})|}{\lambda_{j}}
\end{equation*}%
\begin{equation*}
\Lambda V_{1,2}(f)=\sup_{\{I_{i}\},\{J_{j}\}\in \Omega }\sum_{i}\sum_{j}%
\frac{|f(I_{i},J_{j})|}{\lambda _{i}\lambda _{j}}.
\end{equation*}

\begin{definition}
We say that the function $f$ has bounded $\Lambda $-variation on $%
I^{2}:=[0,1]\times \left[ 0,1\right] $ and write $f\in \Lambda BV$, if
\begin{equation*}
\Lambda V(f):=\Lambda V_{1}(f)+\Lambda V_{2}(f)+\Lambda V_{1,2}(f)<\infty .
\end{equation*}%
We say that the function $f$ has bounded Partial $\Lambda $-variation and
write $f\in P\Lambda BV$ if
\begin{equation*}
P\Lambda V(f):=\Lambda V_{1}(f)+\Lambda V_{2}(f)<\infty .
\end{equation*}
\end{definition}

If $\lambda _{n}\equiv 1$ (or if $0<c<\lambda _{n}<C<\infty ,\ n=1,2,\ldots $%
) the classes $\Lambda BV$ and $P\Lambda BV$ coincide with the Hardy class $%
BV$ and $PBV$ respectively. Hence it is reasonable to assume that $\lambda
_{n}\rightarrow \infty $ and since the intervals in $E=\{I_{i}\}$ are
ordered arbitrarily, we will suppose, without loss of generality, that the
sequence $\{\lambda _{n}\}$ is increasing. Thus, in what follows we suppose
that
\begin{equation}
1<\lambda _{1}\leq \lambda _{2}\leq \ldots ,\qquad \lim_{n\rightarrow
\infty, }\lambda _{n}=\infty, \qquad \sum_{n=1}^\infty
\frac1{\lambda_n}=\infty.  \label{Lambda}
\end{equation}

In the case when $\lambda _{n}=n,\ n=1,2\ldots $ we say \textit{Harmonic
Variation} instead of $\Lambda $-variation and write $H$ instead of $\Lambda
$ ($HBV$, $PHBV$, $HV(f)$, etc).

The notion of $\Lambda $-variation was introduced by Waterman \cite{20} in
one dimensional case and Sahakian \cite{Saha} in two dimensional case. The
notion of bounded partial variation (class $PBV$) was introduced by Goginava
\cite{GogEJA}. These classes of functions of generalized bounded variation
play an important role in the theory of Fourier series.

We have proved in \cite{GogSah} the following theorem.

\begin{theorem}[Goginava, Sahakian]
\label{T1} Let $\Lambda =\{\lambda _{n}=n\gamma _{n}\}$ and $\gamma _{n}\geq
\gamma _{n+1}>0,\ n=1,2,....\,\,\,$. \newline
1) If
\begin{equation}
\sum_{n=1}^{\infty }\frac{\gamma _{n}}{n}<\infty ,  \label{T1-1}
\end{equation}%
then $P\Lambda BV\subset HBV$.\newline
2) If, in addition, for some $\delta >0$
\begin{equation}
\gamma _{n}=O(\gamma _{n^{[1+\delta ]}})\quad \text{as}\quad n\rightarrow
\infty  \label{T1-10}
\end{equation}%
and
\begin{equation}
\sum_{n=1}^{\infty }\frac{\gamma _{n}}{n}=\infty ,  \label{T1-11}
\end{equation}%
then $P\Lambda BV\not\subset HBV$.
\end{theorem}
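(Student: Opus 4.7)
My plan for Part~1 is to dispose of the partial harmonic variations first: since $\gamma_n$ is decreasing, $\gamma_n\le \gamma_1$ gives $1/n\le \gamma_1/\lambda_n$, so $HV_1(f), HV_2(f)\le \gamma_1\bigl(\Lambda V_1(f)+\Lambda V_2(f)\bigr)<\infty$ immediately. The substantive task is to bound the mixed variation $HV_{1,2}(f)$. For any finite nonoverlapping collections $\{I_i\}_{i=1}^N$ and $\{J_j\}_{j=1}^M$, the key analytic inputs are $\sum_i|f(I_i,J_j)|/\lambda_i\le 2\Lambda V_1(f)$ (uniformly in $j$) and $\sum_j|f(I_i,J_j)|/\lambda_j\le 2\Lambda V_2(f)$ (uniformly in $i$), both obtained from $f(I_i,J_j)=f(I_i,d_j)-f(I_i,c_j)$ and the definition of the partial $\Lambda$-variations.

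The strategy is a double Abel summation. Applied in $i$ alone, with partial sums $A_{i,j}=\sum_{k\le i}|f(I_k,J_j)|\le 2\Lambda V_1(f)\lambda_i$, it yields $\sum_i|f(I_i,J_j)|/i\le C\,\Lambda V_1(f)$ uniformly in $j$, using $\sum\gamma_n/n<\infty$; but summing this naively over $j$ with weight $1/j$ would introduce an unacceptable $\log M$ factor. One must therefore perform the Abel step jointly in both variables, which reduces the analysis to estimating the iterated partial sums $\tilde A_{ij}:=\sum_{k\le i,\,l\le j}|f(I_k,J_l)|$ against the weights $1/(i^2 j^2)$. The principal obstacle here is that the abstract row/column bound $\tilde A_{ij}\le 2\min(\Lambda V_1\lambda_i j,\ \Lambda V_2\, i\lambda_j)$ is not sharp enough --- for instance $\sum_{i,j}\min(\gamma_i,\gamma_j)/(ij)$ can diverge even when $\sum\gamma_n/n$ converges. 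To close the estimate one must exploit the genuine two-dimensional structure of the mixed differences $f(I_i,J_j)=f(b_i,d_j)-f(b_i,c_j)-f(a_i,d_j)+f(a_i,c_j)$, typically through a dyadic decomposition of the index plane combined with cancellation between the four corner evaluations.

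For Part~2, the plan is an explicit construction. Choose a rapidly increasing sequence $\{n_k\}$ with $n_{k+1}\ge n_k^{1+\delta}$ and set $f(x,y)=\sum_k c_k\phi_k(x,y)$, where each $\phi_k$ is a step function supported on well-separated building blocks whose $x$- and $y$-projections are pairwise disjoint across distinct $k$. Because of the disjointness, for each fixed $y$ at most one $\phi_k$ contributes to the partial variation in $x$, so $\Lambda V_1(f)$ and $\Lambda V_2(f)$ collapse to Waterman-type sums involving the $c_k$ and $\lambda_{n_k}$, and one can arrange $\sum c_k/\lambda_{n_k}<\infty$. On the other hand, the mixed variation $HV_{1,2}(f)$ sees a genuine two-dimensional contribution from each $\phi_k$; choosing partitions appropriately yields a lower bound of the form $HV_{1,2}(f)\gtrsim \sum_k c_k/n_k$, which can be made divergent because $\sum\gamma_n/n=\infty$. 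The regularity hypothesis $\gamma_n=O(\gamma_{n^{[1+\delta]}})$ guarantees that $\lambda_{n_k}$ and $n_k\gamma_{n_k}$ remain comparable along the sparse subsequence $\{n_k\}$, preserving the required gap between the two series.

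The main obstacle in Part~2 is the simultaneous control: sparsity of the $\phi_k$ is needed for the partial variations to remain finite, while density is needed for the harmonic mixed variation to diverge. The geometric growth $n_{k+1}\ge n_k^{1+\delta}$ and the regularity condition (T1-10) are precisely what make both requirements compatible, and verifying the lower bound on $HV_{1,2}(f)$ requires a careful choice of the witnessing partitions $\{I_i\}$ and $\{J_j\}$ whose endpoints are synchronized with the supports of the $\phi_k$.
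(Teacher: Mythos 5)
Note first that the paper you are commenting on does not prove Theorem~\ref{T1} at all: it is quoted from \cite{GogSah}, so the comparison has to be with the published Goginava--Sahakian argument. Measured against that, your submission is a plan rather than a proof, and the gap sits exactly at the theorem's core. For Part~1 your reduction is fine as far as it goes: $HV_1,HV_2$ are trivially controlled since $\gamma_n\le\gamma_1$, and your observation that the crude row/column bound fails is correct (the example $\gamma_n=1/\log^2 n$, for which $\sum_{i,j}\min(\gamma_i,\gamma_j)/(ij)$ diverges while $\sum\gamma_n/n$ converges, is exactly right). But at that point you write that one ``must exploit the genuine two-dimensional structure \dots typically through a dyadic decomposition \dots combined with cancellation between the four corner evaluations'' --- that is precisely the statement to be proved, and no argument is given. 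Moreover the hint points in a doubtful direction: the known proof uses no cancellation among the corner values. It splits $\sum_{i,j}|f(I_i,J_j)|/(ij)$ according to $i\le j$ and $i>j$ and, for each fixed $j$, applies a decreasing-rearrangement estimate to the row $\{|f(I_i,J_j)|\}_i$: if $a_1\ge a_2\ge\cdots$ is that rearrangement, then $a_k\sum_{l\le k}1/\lambda_l\le\sum_{l\le k}a_l/\lambda_l\le 2\Lambda V_1(f)$, so $a_k\le 2\Lambda V_1(f)\bigl(\sum_{l\le k}1/(l\gamma_l)\bigr)^{-1}$, and it is the summation of these bounds in both indices, using the monotonicity of $\gamma_n$ and $\sum\gamma_n/n<\infty$, that closes the estimate. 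Without some mechanism of this kind your Abel-summation scheme does not terminate.

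Part~2 has the same character: the qualitative picture (blocks with disjoint $x$- and $y$-projections along a sparse sequence $n_k$, so that for fixed $y$ only one block contributes to the $x$-variation) matches the known counterexample, but none of the quantitative work is done. You do not specify the number of jumps inside each block, the coefficients $c_k$, or how (\ref{T1-10}) and (\ref{T1-11}) enter the verification that $\Lambda V_1(f)+\Lambda V_2(f)<\infty$ while $HV_{1,2}(f)=\infty$; note also that a single block with many jumps has its own $\Lambda$-variation starting from $\lambda_1$, so sparsity across blocks alone does not make $\Lambda V_1(f)$ finite --- the jump count per block must be balanced against $c_k$. Finally, the asserted lower bound $HV_{1,2}(f)\gtrsim\sum_k c_k/n_k$ is not derived and looks wrong as stated: in the actual construction the divergence comes from the harmonic sum over the many jumps inside each block (a logarithmic-type factor per block), not from one term of size $c_k/n_k$ per block. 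As it stands, both halves identify the right difficulties but leave them unresolved.
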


Dyachenko and Waterman \cite{DW} introduced another class of functions of
generalized bounded variation. Denoting by $\Gamma $ the the set of finite
collections of nonoverlapping rectangles $A_{k}:=\left[ \alpha _{k},\beta
_{k}\right] \times \left[ \gamma _{k},\delta _{k}\right] \subset T^{2}$ we
define
\begin{equation*}
\Lambda ^{\ast }V\left( f\right) :=\sup_{\{A_{k}\}\in \Gamma }\sum\limits_{k}%
\frac{\left\vert f\left( A_{k}\right) \right\vert }{\lambda _{k}}.
\end{equation*}

\begin{definition}[Dyachenko, Waterman]
Let $f$ be a real function on $I^{2}$. We say that $f\in \Lambda ^{\ast }BV$
if
\begin{equation*}
\Lambda V(f):=\Lambda V_{1}(f)+\Lambda V_{2}(f)+\Lambda ^{\ast }V\left(
f\right) <\infty .
\end{equation*}
\end{definition}

In \cite{GMJ} Goginava and Sahakian introduced a new class of functions of
generalized bounded variation and investigated the convergence of Fourier
series of function of that classes.

For the sequence $\Lambda =\{\lambda _{n}\}_{n=1}^{\infty }$ we denote
\begin{equation*}
\Lambda ^{\#}V_{1}(f)=\sup_{\{y_{i}\}\subset T}\sup_{\{I_{i}\}\in \Omega
}\sum_{i}\frac{|f(I_{i},{y_{i}})|}{\lambda _{i}},
\end{equation*}%
\begin{equation*}
\Lambda ^{\#}V_{2}(f)=\sup_{\{x_{j}\}\subset T}\sup_{\{J_{j}\}\in \Omega
}\sum_{j}\frac{|f(x_{j},J_{j}|}{\lambda _{j}}.
\end{equation*}

\begin{definition}[Goginava, Sahakian]
We say that the function $f$ belongs to the class $\Lambda ^{\#}BV$, if
\begin{equation*}
\Lambda ^{\#}V(f):=\Lambda ^{\#}V_{1}(f)+\Lambda ^{\#}V_{2}(f)<\infty .
\end{equation*}
\end{definition}

The following theorem was proved in \cite{GMJ}

\begin{theorem}
\label{T2} a) If
\begin{equation}
\overline{\lim\limits_{n\rightarrow \infty }}\frac{\lambda _{n}\log \left(
n+1\right) }{n}<\infty ,  \label{cond1}
\end{equation}%
then%
\begin{equation*}
\Lambda ^{\#}BV\subset HBV.
\end{equation*}%
b) If $\frac{\lambda _{n}}{n}\downarrow 0$ and%
\begin{equation*}
\overline{\lim\limits_{n\rightarrow \infty }}\frac{\lambda _{n}\log \left(
n+1\right) }{n}=+\infty ,
\end{equation*}%
then%
\begin{equation*}
\Lambda ^{\#}BV\not\subset HBV.
\end{equation*}
\end{theorem}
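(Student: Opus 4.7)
The plan is to handle the inclusion (a) and the counterexample (b) in turn.

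For (a), fix $f \in \Lambda^{\#}BV$. Since $\lambda_n \log(n+1)/n \le C$ forces $\lambda_n \le C'n$, for any $y$ and any ordered $\{I_i\}$,
\[
\sum_i \frac{|f(I_i,y)|}{i} \le C'\sum_i \frac{|f(I_i,y)|}{\lambda_i} \le C'\Lambda^{\#}V_1(f),
\]
and by symmetry $HV_1(f), HV_2(f) < \infty$. The substantive point is the mixed variation $HV_{1,2}(f)=\sup\sum_{i,j}|f(I_i,J_j)|/(ij)$. Given ordered nonoverlapping families $\{I_i\}_{i=1}^m$ and $\{J_j\}_{j=1}^m$, I would start from the endpoint decomposition: since $f(I_i, J_j)$ is the mixed difference, for each $(i,j)$ there exist $y_{ij} \in \{c_j, d_j\}$ and $x_{ij} \in \{a_i, b_i\}$ with $|f(I_i, y_{ij})|, |f(x_{ij}, J_j)| \ge \tfrac12|f(I_i, J_j)|$. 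Combining these via the geometric mean $|f(I_i,J_j)| \le 2\sqrt{|f(I_i,y_{ij})|\,|f(x_{ij},J_j)|}$ and Cauchy--Schwarz reduces the estimate to controlling $\sum_{i,j}|f(I_i,y_{ij})|/(ij)$ and its $j$-analog. The decisive analytic input is the log-growth hypothesis, which is equivalent to $\sum_{l\le n}\lambda_l^{-1} \gtrsim (\log n)^2$: for fixed $j$ the decreasing rearrangement $a_k^*$ of $\{|f(I_i,y_{ij})|\}_i$ satisfies $\sum_k a_k^*/\lambda_k \le \Lambda^{\#}V_1(f)$, so monotonicity gives $a_k^* \lesssim \Lambda^{\#}V_1(f)/(\log k)^2$, turning $\sum_k a_k^*/k$ into a convergent series uniformly in $j$.

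The main obstacle is that a single application of the log-absorption consumes the divergent $1/i$ weights but leaves behind a residual $\sum_j 1/j \asymp \log m$ loss; one must absorb logarithms in both index directions simultaneously. This is the role of the geometric-mean/Cauchy--Schwarz step, which splits the sum symmetrically so that the log improvements in $i$ (from $\Lambda^{\#}V_1$) and in $j$ (from $\Lambda^{\#}V_2$) act in parallel and the residual logarithms cancel. Executing this symmetric bookkeeping cleanly, without circular estimates, is the technical heart of the proof.

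For (b), we construct $f \in \Lambda^{\#}BV \setminus HBV$. Using $\limsup \lambda_n\log(n+1)/n = \infty$ (together with $\lambda_n/n \downarrow 0$ to keep the underlying sequence well-behaved), choose $n_1<n_2<\cdots$ with $\lambda_{n_k}\log n_k/n_k \ge \alpha_k\uparrow\infty$; place pairwise-disjoint small squares $Q_k \subset I^2$ and set $f = \sum_k f_k$, where $f_k$ is a $\pm h_k$ checkerboard on an $n_k \times n_k$ cell grid inside $Q_k$. Direct estimation gives
\[
\Lambda^{\#}V_r(f_k) \lesssim \frac{h_k n_k}{\lambda_{n_k}} \lesssim \frac{h_k\log n_k}{\alpha_k}, \qquad HV_{1,2}(f_k) \gtrsim h_k(\log n_k)^2,
\]
the second by testing against the natural grid partition of $Q_k$. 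Choosing $h_k = \alpha_k^{1/2}/\log n_k$ and $n_k$ growing fast enough makes $\sum_k \Lambda^{\#}V(f_k) < \infty$ while $\sum_k HV_{1,2}(f_k) = +\infty$, placing $f$ in $\Lambda^{\#}BV \setminus HBV$, as required.
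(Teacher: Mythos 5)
Your proposal does not close either half of the theorem. (Note that the paper you were checked against does not itself prove this statement: Theorem \ref{T2} is quoted from the cited paper of Goginava and Sahakian, so the comparison below is with what a complete proof requires, not with a displayed argument in this text.) In part (a) your reductions are fine: $\lambda_n\le Cn$ gives $HV_1(f),HV_2(f)<\infty$, and the rearrangement bound $a_k^\ast\lesssim \Lambda^{\#}V_1(f)/(\log k)^2$, coming from $\sum_{l\le k}1/\lambda_l\gtrsim(\log k)^2$, is correct. But the proof collapses exactly at the step you yourself defer as the ``technical heart'', and the mechanism you propose cannot deliver it. After the geometric-mean and Cauchy--Schwarz step you must bound $\bigl(\sum_{i,j}|f(I_i,y_{ij})|/(ij)\bigr)^{1/2}\bigl(\sum_{i,j}|f(x_{ij},J_j)|/(ij)\bigr)^{1/2}$. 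In the first factor only $\Lambda^{\#}V_1$ is usable: for each fixed $j$ the inner sum over $i$ is at most $C\Lambda^{\#}V_1(f)$, and nothing in the hypotheses forces that inner sum to decay in $j$, because the numbers $|f(I_i,y_{ij})|$ for varying $j$ are differences over the \emph{same} interval $I_i$ at different points, which $\Lambda^{\#}V_2$ does not control. Hence the first factor is only $O(\Lambda^{\#}V_1(f)\log N)$ and the second only $O(\Lambda^{\#}V_2(f)\log M)$, so the product of square roots is still of order $\sqrt{\log M\log N}$: the ``residual logarithms'' do not cancel, they reappear intact. This is not deferred bookkeeping; with the ingredients you put on the table the claimed cancellation is false, and a correct proof must couple the two variations pair by pair (choosing for each $(i,j)$ which one-dimensional difference to use and running a genuinely two-dimensional counting/rearrangement argument), which is a different and harder argument than the one outlined.

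In part (b) the block/checkerboard template is the right kind of construction, but your key estimate $\Lambda^{\#}V_r(f_k)\lesssim h_k n_k/\lambda_{n_k}$ is wrong, and in this borderline problem the lost logarithm kills your parameter choice. A worst-case test family consists of $\sim n_k$ tiny intervals straddling the interior cell boundaries of the $k$-th block; each such difference equals $2h_k$ at every admissible row, and after ordering these intervals pay the weights $1/\lambda_1,\dots,1/\lambda_{n_k}$, not $1/\lambda_{n_k}$. Thus $\Lambda^{\#}V_r(f_k)\asymp h_k\sum_{i\le n_k}1/\lambda_i$, which under $\lambda_n/n\downarrow$ is only bounded by $h_k n_k\log n_k/\lambda_{n_k}\asymp h_k(\log n_k)^2/\alpha_k$. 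Moreover $\lambda_n/n\downarrow$ forces $\alpha_k=\lambda_{n_k}\log n_k/n_k\le\lambda_1\log n_k$, so with $h_k=\sqrt{\alpha_k}/\log n_k$ the true per-block variation is of order at least $\log n_k/\sqrt{\alpha_k}\ge\sqrt{\log n_k/\lambda_1}\to\infty$; concretely, for the admissible sequence $\lambda_n=n/\log\log n$ one gets $\Lambda^{\#}V_1(f_k)\asymp\sqrt{\log n_k\log\log n_k}\to\infty$, so the function you build is not in $\Lambda^{\#}BV$ and the counterexample fails as parameterized. The scheme is repairable, e.g.\ by taking $h_k=\bigl(\sqrt{\alpha_k}\sum_{i\le n_k}1/\lambda_i\bigr)^{-1}$ with $\alpha_k\ge k^4$, which gives $\sum_k\Lambda^{\#}V(f_k)\lesssim\sum_k\alpha_k^{-1/2}<\infty$ while each block alone yields $HV_{1,2}(f)\gtrsim h_k(\log n_k)^2\gtrsim\sqrt{\alpha_k}\to\infty$; but as written both halves of your argument have genuine gaps, the one in part (a) being the essential missing idea.
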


In this paper we introduce new classes of bounded generalized variation.

Let $f$ be a function defined on $R^{2}$ with $1$- periodic relative to each
variable. $\Delta _{1}$ and $\Delta _{2}$ is said to be a partitions with
period $1$, if
\begin{equation*}
\Delta _{i}:\cdots <t_{-1}^{\left( i\right) }<t_{0}^{\left( i\right)
}<t_{1}^{\left( i\right) }<\cdots <t_{m_{i}}^{\left( i\right)
}<t_{m_{i}+1}^{\left( i\right) }<\cdots ,\text{ \ \ }i=1,2
\end{equation*}%
satisfies $t_{k+m_{i}}^{\left( i\right) }=t_{k}^{\left( i\right) }+1$ for $%
k=0,\pm 1,\pm 2,...$, where $m_{i},$\ $i=1,2$ are a positive
integers.\medskip\ \newline

\begin{definition}
Let $p\left( n\right) $ be an increasing sequence such that $1\leq p\left(
n\right) \uparrow p$, $n\rightarrow \infty $, where $1\leq p\leq +\infty $.
We say that a function $f$ belongs to the class $BV^{\#}\left( p\left(
n\right) \uparrow p\right) $ if
\begin{align*}
&V_{1}^{\#}\left( f,p\left( n\right) \uparrow p\right)\\
&\hskip+0.8cm := \sup_{\{y_{i}\}\subset I}\sup\limits_{n\geq 1}\sup\limits_{\Delta
_{1}}\left\{ \left( \sum\limits_{i=1}^{m_{1}}\left\vert f\left(
I_{i},y_{i}\right) \right\vert ^{p\left( n\right) }\right) ^{1/p\left(
n\right) }:\inf\limits_{i}\left\vert I_{i}\right\vert \geq \frac{1}{2^{n}}%
\right\} <+\infty,\\
\intertext{and}
&V_{2}^{\#}\left( f,p\left( n\right) \uparrow p\right)\\
&\hskip+0.8cm := \sup_{\{x_{j}\}\subset I}\sup\limits_{n\geq 1}\sup\limits_{\Delta
_{2}}\left\{ \left( \sum\limits_{j=1}^{m_{2}}\left\vert f\left(
x_{j},J_{j}\right) \right\vert ^{p\left( n\right) }\right) ^{1/p\left(
n\right) }:\inf\limits_{j}\left\vert J_{j}\right\vert \geq \frac{1}{2^{n}}%
\right\} <+\infty ,
\end{align*}%
where%
\begin{equation*}
I_{i}:=\left( t_{i-1}^{\left( 1\right) },t_{i}^{\left( 1\right) }\right)
,J_{j}:=\left( t_{j-1}^{\left( 2\right) },t_{j}^{\left( 2\right) }\right) .
\end{equation*}
\end{definition}

$C\left( I^{2}\right) $ and $B\left( I^{2}\right) $ are the spaces of
continuous and bounded functions given on $I^{2}$, respectively.

In this paper we prove inclusion relations between $\Lambda ^{\#}BV$ and $%
BV^{\#}\left( p\left( n\right) \uparrow \infty \right) $ classes. In
particular, the following are true

\begin{theorem}
\label{th1}$\Lambda ^{\#}BV\subset BV^{\#}\left( p\left( n\right) \uparrow
\infty \right) $ if and only if
\begin{equation}
\overline{\lim\limits_{n\rightarrow \infty }}\sup\limits_{1\leq m\leq 2^{n}}%
\frac{m^{1/p\left( n\right) }}{\sum\limits_{j=1}^{m}\left( 1/\lambda
_{j}\right) }<\infty .  \label{2}
\end{equation}
\end{theorem}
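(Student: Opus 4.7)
Set $S_m := \sum_{j=1}^m 1/\lambda_j$ and $M_n := \sup_{1\le m \le 2^n} m^{1/p(n)}/S_m$; then condition (2) reads $\limsup_n M_n < \infty$, and under (2) put $C := \sup_n M_n < \infty$. I would prove the two implications separately, the substantive content being the sufficiency direction.

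\emph{Sufficiency.} Fix $f$ with $V := \Lambda^{\#}V(f) < \infty$ and bound $V_1^{\#}(f, p(n)\uparrow\infty)$; the argument for $V_2^{\#}$ is symmetric. Fix $n \ge 1$, a partition $\Delta_1$ with $m$ intervals of length $\ge 1/2^n$ (so $m \le 2^n$), and points $\{y_i\}$; set $a_i := |f(I_i, y_i)|$. Because the supremum in $\Lambda^{\#}V_1$ runs over arbitrarily ordered collections and $\{1/\lambda_i\}$ is decreasing, the rearrangement inequality permits me (after relabelling, using permutation invariance of $\sum a_i^{p(n)}$) to assume $a_1 \ge \cdots \ge a_m$ with $\sum_{k=1}^m a_k/\lambda_k \le V$. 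The crux is to combine the layer-cake identity $a_k = \int_0^\infty \mathbf{1}[a_k > t]\,dt$ with Minkowski's integral inequality in $\ell^{p(n)}$:
\[
\Bigl(\sum_{k=1}^m a_k^{p(n)}\Bigr)^{1/p(n)} \le \int_0^\infty N(t)^{1/p(n)}\,dt, \qquad N(t) := \#\{k \le m : a_k > t\}.
\]
Monotonicity of $a$ forces $\{k:a_k>t\} = \{1,\ldots,N(t)\}$, hence
\[
V \ge \sum_{k=1}^m \frac{a_k}{\lambda_k} = \int_0^\infty S_{N(t)}\,dt.
\]
Since $N(t) \le m \le 2^n$, (2) gives $N(t)^{1/p(n)} \le C\, S_{N(t)}$ pointwise in $t$; integrating yields $(\sum a_k^{p(n)})^{1/p(n)} \le CV$. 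Taking the supremum over $n$, $\Delta_1$ and $\{y_i\}$ completes the sufficiency.

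\emph{Necessity.} I would invoke the closed graph theorem. Both $\Lambda^{\#}BV$ and $BV^{\#}(p(n)\uparrow\infty)$ are Banach under the norms $\|\cdot\|_\infty+\Lambda^{\#}V(\cdot)$ and $\|\cdot\|_\infty + V^{\#}(\cdot, p(n)\uparrow\infty)$ (standard lower-semicontinuity argument), and either norm dominates $\|\cdot\|_\infty$, so the inclusion has closed graph and is therefore bounded: $V^{\#}(f, p(n)\uparrow\infty) \le C'(\|f\|_\infty + \Lambda^{\#}V(f))$. Test against $G_m(x,y) := g_m(x)$, where $g_m$ is the $\{0,1\}$-valued step function on $[0,1]$ with $m$ equal pieces. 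A short computation gives $\|G_m\|_\infty = 1$, $\Lambda^{\#}V(G_m) = S_m$, and, using the natural partition into $m$ equal subintervals, $V_1^{\#}(G_m, p(n)\uparrow\infty) \ge m^{1/p(n)}$ whenever $m \le 2^n$. The boundedness therefore forces $m^{1/p(n)}/S_m$ to stay uniformly bounded over all admissible $(n,m)$, which is exactly (2).

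\emph{Main obstacle.} The sufficiency is where the real work sits. The naive approach, relying only on the pointwise bound $a_k \le V/S_k$ and a dyadic decomposition of $\sum a_k^{p(n)}$, produces an extraneous factor of order $(\log m)^{1/p(n)}$, which can blow up under (2) alone when $p(n)$ grows slowly. The layer-cake/Minkowski step is precisely what isolates the extremal configuration — truncated constants $a_k = (V/S_{\bar m})\mathbf{1}_{k \le \bar m}$, whose $\ell^{p(n)}$-norm to $\Lambda$-variation ratio equals $\bar m^{1/p(n)}/S_{\bar m}$ — and converts (2) directly into the sharp estimate $(\sum a_k^{p(n)})^{1/p(n)} \le CV$ without any logarithmic loss.
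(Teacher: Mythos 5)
Your argument is correct, but it is not the route the paper takes, and the comparison is worth recording. For sufficiency the paper gives no details at all: it simply states the key estimate $\bigl(\sum_k |f(I_k,y_k)|^{p(n)}\bigr)^{1/p(n)}\leq \Lambda^{\#}V_1(f)\sup_{1\leq m\leq 2^n} m^{1/p(n)}\big/\sum_{i=1}^m(1/\lambda_i)$ and refers to ``the method of Kuprikov''; your decreasing-rearrangement plus layer-cake/Minkowski (equivalently, Abel summation writing a decreasing sequence as a positive combination of indicators $\mathbf{1}_{\{1,\dots,m\}}$) is a clean self-contained proof of exactly that estimate, with the sharp constant and no logarithmic loss, so you have in effect supplied the omitted lemma. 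For necessity the two proofs genuinely diverge: the paper, assuming (\ref{2}) fails, extracts a lacunary sequence $\{n_k\}$ satisfying (\ref{4})--(\ref{6}), splits into two cases according to the size of $m(n_k)$, and builds an explicit function $f(x,y)=\sum_k f_k(x)f_k(y)$ out of oscillating sawtooth blocks of height $h_k\sim\bigl(2^k\sum_{j\le m(s_k)}1/\lambda_j\bigr)^{-1/2}$ which lies in $\Lambda^{\#}BV$ but not in $BV^{\#}(p(n)\uparrow\infty)$; you instead use soft functional analysis (completeness of both spaces, closed graph, then the one-parameter family of step functions $G_m(x,y)=g_m(x)$). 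Your route is shorter and cleanly separates the quantitative content (the test computation $V_1^{\#}(G_m)\gtrsim m^{1/p(n)}$ for $m\le 2^n$, $\Lambda^{\#}V(G_m)\asymp S_m$) from the uniformity, at the price of being non-constructive and of requiring the completeness of $\Lambda^{\#}BV$ and $BV^{\#}(p(n)\uparrow\infty)$ under the norms $\|\cdot\|_\infty+\Lambda^{\#}V(\cdot)$ and $\|\cdot\|_\infty+V^{\#}(\cdot)$ --- that lower-semicontinuity argument is routine but is the one step you must actually write out; the paper's construction avoids all of this and moreover exhibits a concrete member of $\Lambda^{\#}BV\setminus BV^{\#}(p(n)\uparrow\infty)$. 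Two harmless slips to fix in your version: $\Lambda^{\#}V(G_m)$ is only comparable to $S_m$ (it is $\le S_{m+1}\le 2S_m$, and equals roughly $S_{m-1}$ depending on endpoint conventions), and for odd $m$ the equal-piece partition yields $m-1$ unit jumps rather than $m$, so the conclusion should be stated as $m^{1/p(n)}\le C''S_m$ up to absolute constants, which still gives (\ref{2}).
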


\begin{theorem}
\label{th2}Let $\sum\limits_{n=1}^{\infty }\left( 1/\lambda _{n}\right)
=+\infty $ . Then there exists a functions \\$f\in BV^{\#}\left( p\left(
n\right) \uparrow \infty \right) \bigcap C\left( I^{2}\right) $ such that $%
f\notin \Lambda BV^{\#}.$
\end{theorem}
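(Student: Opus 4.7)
The plan is to reduce the theorem to a one-variable construction. For any continuous, $1$-periodic $g:I\to\mathbb{R}$, the function $f(x,y):=g(x)$ lies in $C(I^2)$ and satisfies $f(x_j,J_j)\equiv 0$ and $f(I_i,y_i)=g(I_i)$ (independent of $y_i$), so
\begin{equation*}
V_2^{\#}\bigl(f,p(n)\uparrow\infty\bigr)=\Lambda^{\#}V_2(f)=0,
\end{equation*}
\begin{equation*}
V_1^{\#}\bigl(f,p(n)\uparrow\infty\bigr)=V\bigl(g,p(n)\uparrow\infty\bigr),\qquad \Lambda^{\#}V_1(f)=V_{\Lambda}(g),
\end{equation*}
where the right-hand sides denote the one-dimensional Kita--Yoneda and Waterman variations defined earlier. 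It therefore suffices to produce a continuous $g\in BV\bigl(p(n)\uparrow\infty\bigr)\setminus\Lambda BV$.

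To construct such a $g$, I would use a ``tooth packet'' construction. Set $h_k:=2^{-k}$ and let $L_k$ be the smallest positive integer with $\sum_{i=1}^{L_k}\lambda_i^{-1}\geq 4^k$; this is finite by the hypothesis $\sum\lambda_n^{-1}=\infty$. Since $p(n)\uparrow\infty$, pick $n_1<n_2<\cdots$ so that $p(n_k)\geq\log_2 L_k$. Place pairwise disjoint blocks $B_k\subset I$ of length $|B_k|=2^{-n_k}$, separated by gaps where $g\equiv 0$, and inside each $B_k$ let $g$ be piecewise linear with $L_k$ triangular teeth of common height $h_k$, alternating between $0$ at valleys and $h_k$ at peaks; this makes $g$ continuous on all of $I$.

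The assertion $g\notin\Lambda BV$ is immediate: within $B_k$ take the $L_k$ subintervals each running from a valley to the adjacent peak; the resulting Waterman sum is at least $h_k\sum_{i=1}^{L_k}\lambda_i^{-1}\geq 2^k\to\infty$. To prove $g\in BV(p(n)\uparrow\infty)$, fix $n$ and any partition $\Delta$ with $\inf|I_i|\geq 2^{-n}$. An interval contained in a \emph{resolved} block $B_k$ (with $n_k\leq n$) satisfies $|g(I_i)|\leq h_k$, so the total contribution of block $k$ is at most $L_k h_k^{p(n)}\leq 2^{p(n_k)-kp(n)}\leq 2^{-(k-1)p(n)}$; summing over such $k$ yields a geometric series bounded by $2$. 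If $n_k>n$, then $|B_k|<2^{-n}$, so at most one partition endpoint falls in $B_k$, and the two intervals sharing that endpoint contribute at most $2h_k^{p(n)}=2^{1-kp(n)}$; summing over these ``straddled'' blocks gives $\leq 2$. Hence $\sum|g(I_i)|^{p(n)}\leq 4$ uniformly in $n$ and $\Delta$, and $V(g,p(n)\uparrow\infty)\leq 4$.

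The main obstacle is the geometric balance driven by the choice of $h_k$: it must decay fast enough that the total straddled contribution stays uniformly bounded in $n$, yet slowly enough that $h_k\sum_{i=1}^{L_k}\lambda_i^{-1}\to\infty$. The combined choice $h_k=2^{-k}$ together with $L_k$ from $\sum_{i=1}^{L_k}\lambda_i^{-1}\geq 4^k$ strikes this balance using only the two hypotheses $\sum 1/\lambda_n=\infty$ and $p(n)\uparrow\infty$; the latter yields $L_k^{1/p(n_k)}\leq 2$, so each tooth packet fits within the $p(n_k)$-variation budget.
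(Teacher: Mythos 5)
Your reduction to one variable is legitimate and is a genuinely different route from the paper's. Since $f(x,y)=g(x)$ gives $f(x_j,J_j)=0$ and $f(I_i,y_i)=g(I_i)$, one indeed has $V_2^{\#}=\Lambda^{\#}V_2=0$, $V_1^{\#}\left(f,p(n)\uparrow\infty\right)=V\left(g,p(n)\uparrow\infty\right)$ and $\Lambda^{\#}V_1(f)=V_{\Lambda}(g)$, so it suffices to produce a continuous one-variable $g\in BV\left(p(n)\uparrow\infty\right)\setminus\Lambda BV$; your tooth-packet $g$ (heights $h_k=2^{-k}$, $L_k$ teeth with $\sum_{i\le L_k}\lambda_i^{-1}\ge 4^k$, block length $2^{-n_k}$ with $p(n_k)\ge\log_2 L_k$) does this, and the divergence $h_k\sum_{i\le L_k}\lambda_i^{-1}\ge 2^k$ is correct. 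The paper instead builds a genuinely two-variable function $r(x,y)=\sum_k r_k(x)r_k(y)$, a sequence of single tents with heights $c_k^2=\bigl(\sum_{j\le k}1/\lambda_j\bigr)^{-1/2}$ accumulating at the origin, and gets $r\notin\Lambda^{\#}BV$ by taking one interval per tent together with varying points $y_j$ at the peaks; that example even keeps the fixed-$y$ partial $\Lambda$-variations bounded, so it isolates the role of the $\#$-structure, whereas your example already destroys $\Lambda V_1$. Both yield the stated theorem.

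However, your verification that $g\in BV\left(p(n)\uparrow\infty\right)$ needs repair at two points. First, for a resolved block $B_k$ the number of partition intervals contained in $B_k$ may greatly exceed $L_k$ (it can be of order $2^{n-n_k}$), so ``each increment is at most $h_k$'' does not by itself give the bound $L_kh_k^{p(n)}$; combine instead $\max_i|g(I_i)|\le h_k$ with the total-variation bound $\sum_{I_i\subset B_k}|g(I_i)|\le 2L_kh_k$ to obtain $\sum_{I_i\subset B_k}|g(I_i)|^{p(n)}\le 2L_kh_k^{p(n)}$, after which your chain $L_kh_k^{p(n)}\le 2^{p(n_k)-kp(n)}\le 2^{-(k-1)p(n)}$ goes through. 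Second, the claim that the two intervals meeting the unique partition point inside an unresolved block contribute at most $2h_k^{p(n)}$ is false as stated: such an interval may have its other endpoint in a taller block (e.g.\ at a peak of $B_1$), so its increment can be close to $1/2$ no matter how small $h_k$ is; moreover, intervals having one endpoint inside a resolved block without being contained in it are covered by neither of your two cases. All of this is fixed by the standard charging argument: for every interval not contained in a block write $|g(t_i)-g(t_{i-1})|^{p(n)}\le 2^{p(n)-1}\bigl(|g(t_i)|^{p(n)}+|g(t_{i-1})|^{p(n)}\bigr)$ and charge each endpoint to the block containing it; at most two charged partition points lie in any $B_k$ (one per crossed block edge), each carrying at most $2^{p(n)}h_k^{p(n)}=2^{(1-k)p(n)}$, and summing over $k$ gives an absolute constant. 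With these corrections the uniform bound (an absolute constant in place of your $4$) and hence the theorem follow.
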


\begin{corollary}
\label{cor}$BV^{\#}\left( p\left( n\right) \uparrow \infty \right) \subset
\Lambda ^{\#}BV$ if and only if $\Lambda ^{\#}BV=B\left( I^{2}\right) .$
\end{corollary}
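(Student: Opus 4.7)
The plan is to derive the equivalence as a direct corollary of Theorem~\ref{th2}, via a dichotomy on the convergence of $\sum 1/\lambda_{n}$, supplemented by the elementary fact that $BV^{\#}(p(n)\uparrow\infty)\subset B(I^{2})$.

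The first step is to record this auxiliary inclusion. For any $(x,y)\in I^{2}$, at least one of $[0,x]$, $[x,1]$ has length $\ge 1/2$, so the single-interval case ($m_{1}=1$, $n=1$) in the definition of $V_{1}^{\#}$ together with $1$-periodicity yields $|f(x,y)-f(0,y)|\le V_{1}^{\#}(f,p(n)\uparrow\infty)$. A symmetric argument in the second variable bounds $|f(0,y)-f(0,0)|$, whence $f$ is uniformly bounded on $I^{2}$.

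The direction $(\Leftarrow)$ is then immediate: if $\Lambda^{\#}BV=B(I^{2})$, then $BV^{\#}(p(n)\uparrow\infty)\subset B(I^{2})=\Lambda^{\#}BV$. For $(\Rightarrow)$, assume the inclusion holds. If $\sum 1/\lambda_{n}=\infty$, Theorem~\ref{th2} produces a continuous element of $BV^{\#}(p(n)\uparrow\infty)$ lying outside $\Lambda^{\#}BV$, contradicting the hypothesis; hence $\sum 1/\lambda_{n}<\infty$. But then, for any $f\in B(I^{2})$ with $|f|\le M$ and any $\{I_{i}\}\in\Omega$, $\{y_{i}\}\subset T$,
$$\sum_{i}\frac{|f(I_{i},y_{i})|}{\lambda_{i}}\le 2M\sum_{n=1}^{\infty}\frac{1}{\lambda_{n}}<\infty,$$
and analogously for $\Lambda^{\#}V_{2}$, so $B(I^{2})\subset\Lambda^{\#}BV$. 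Combined with the trivial reverse inclusion this gives $\Lambda^{\#}BV=B(I^{2})$.

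The only genuinely nontrivial input is Theorem~\ref{th2}, which handles the hard case where $\sum 1/\lambda_{n}$ diverges; everything else reduces to a boundedness check and a tail estimate against $\sum 1/\lambda_{n}$, so no serious obstacle is anticipated.
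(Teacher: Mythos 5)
Your argument is correct and takes essentially the same route as the paper: the paper's (one-line) proof simply invokes the equivalence $\Lambda ^{\#}BV=B\left( I^{2}\right) \Leftrightarrow \sum_{n}1/\lambda _{n}<\infty $ together with Theorem~\ref{th2}, and you merely make the ingredients explicit --- the tail estimate $\sum_{i}|f(I_{i},y_{i})|/\lambda _{i}\leq 2M\sum_{n}1/\lambda _{n}$ for the convergent case, Theorem~\ref{th2} for the divergent case, and the inclusion $BV^{\#}\left( p\left( n\right) \uparrow \infty \right) \subset B\left( I^{2}\right) $ which the converse direction implicitly needs. One detail of your boundedness step is off as stated: in the definition of $V_{1}^{\#}$ the partitions have period $1$, so with $m_{1}=1$ the single interval has length $1$ and $f(I_{1},y)=0$ by periodicity, while with $n=1$ every partition interval must have length at least $1/2$, which forces the splitting point to be $1/2$ and does not isolate $[0,x]$ for a general $x$. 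The fix is immediate: for $x\in (0,1)$ take the two-interval partition with nodes $0<x<1$ and any $n$ with $2^{-n}\leq \min (x,1-x)$; then $|f(x,y)-f(0,y)|\leq V_{1}^{\#}\left( f,p\left( n\right) \uparrow \infty \right) $, and the analogous estimate in the second variable gives $|f(0,y)-f(0,0)|\leq V_{2}^{\#}\left( f,p\left( n\right) \uparrow \infty \right) $, so $BV^{\#}\left( p\left( n\right) \uparrow \infty \right) \subset B\left( I^{2}\right) $ as you claim. With that repair your proof is complete and matches the paper's argument.
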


\begin{proof}[Proof of Theorem \protect\ref{th1}]
Let us take an arbitrary $f\in \Lambda ^{\#}BV$. Follow of the method of the paper
Kuprikov in \cite{kupr}, we can prove that the following estimations hold:
\begin{equation*}
\left( \sum\limits_{k=1}^{m_{1}}\left\vert f\left( I_{k},y_{k}\right)
\right\vert ^{p\left( n\right) }\right) ^{1/p\left( n\right) }\leq \Lambda
^{\#}V_{1}\left( f\right) \sup\limits_{1\leq m\leq 2^{n}}\frac{m^{1/p\left(
n\right) }}{\sum\limits_{i=1}^{m}\left( 1/\lambda _{i}\right) }<\infty
\end{equation*}

and
\begin{equation*}
\left( \sum\limits_{k=1}^{m_2}\left\vert f\left( x_{k},J_{k}\right)
\right\vert ^{p\left( n\right) }\right) ^{1/p\left( n\right) }\leq \Lambda
^{\#}V_{2}\left( f\right) \sup\limits_{1\leq m\leq 2^{n}}\frac{m^{1/p\left(
n\right) }}{\sum\limits_{i=1}^{m}\left( 1/\lambda _{i}\right) }<\infty .
\end{equation*}

Therefore, $f\in \Lambda ^{\#}BV\left( p\left( n\right) \uparrow \infty
\right) .$

Next, we suppose that the condition (\ref{2}) does not hold. As an example
we construct function from $\Lambda ^{\#}BV$ which is not in $BV^{\#}\left(
p\left( n\right) \uparrow \infty \right) .$

Since
\begin{equation*}
\overline{\lim\limits_{n\rightarrow \infty }}\sup\limits_{1\leq m\leq 2^{n}}%
\frac{m^{1/p\left( n\right) }}{\sum\limits_{j=1}^{m}\left( 1/\lambda
_{j}\right) }=+\infty ,
\end{equation*}
there exists a sequence of integers $\{n_{k}^{\prime }:k\geq 1\}$ such that
\begin{equation}
\lim\limits_{k\rightarrow \infty }\frac{m\left( n_{k}^{\prime }\right)
^{1/p\left( n_{k}^{\prime }\right) }}{\sum\limits_{j=1}^{m\left(
n_{k}^{\prime }\right) }\left( 1/\lambda _{j}\right) }=+\infty ,  \label{3}
\end{equation}
where
\begin{equation*}
\sup\limits_{1\leq m\leq 2^{n}}\frac{m^{1/p\left( n\right) }}{%
\sum\limits_{j=1}^{m}\left( 1/\lambda _{j}\right) }=\frac{m\left( n\right)
^{1/p\left( n\right) }}{\sum\limits_{j=1}^{m\left( n\right) }\left(
1/\lambda _{j}\right) }.
\end{equation*}

We choose a monotone increasing sequence of positive integers $\{n_{k}:k\geq
1\}\subset \{n_{k}^{\prime }:k\geq 1\}$ such that
\begin{equation}
\frac{m\left( n_{k}\right) ^{1/p\left( n_{k}\right) }}{\sum\limits_{j=1}^{m%
\left( n_{k}\right) }\left( 1/\lambda _{j}\right) }\geq 4^{k},  \label{4}
\end{equation}
\begin{equation}
p\left( n_{k}\right) \geq n_{k-1},  \label{5}
\end{equation}
\begin{equation}
n_{k}>3n_{k-1}+1\,\,\,\,\,\,\,\mbox{for all }k\geq 2.  \label{6}
\end{equation}

From (\ref{3}) and (\ref{5}) it is evident that $2^{2n_{k-1}}<m\left(
n_{k}\right) \leq 2^{n_{k}}.$

Two cases are possible:

a) Let there exists a monotone sequence of positive integers $\{s_{k}:k\geq
1\}\subset \{n_{k}:k\geq 1\}$ such that
\begin{equation}
2^{2s_{k-1}}<m\left( s_{k}\right) \leq 2^{s_{k}-s_{k-1}-1}.  \label{7}
\end{equation}

Consider the function $f_{k}$ defined by
\begin{equation*}
f_{k}\left( x\right) =\left\{
\begin{array}{l}
h_{k}\left( 2^{s_{k}}x-2j+1\right) ,x\in \left[ \left( 2j-1\right)
/2^{s_{k}},2j/2^{s_{k}}\right) \\
-h_{k}\left( 2^{s_{k}}x-2j-1\right) ,x\in \left[ 2j/2^{s_{k}},\left(
2j+1\right) /2^{s_{k}}\right) \\
\,\,\,\,\,\,\,\,\,\,\,\,\,\,\,\,\,\,\,\,\,\,\,\,\,\,\,\,\,\,\,\,\,\,\,\,\,\,%
\,\,\mbox{for\thinspace }j=m\left( s_{k-1}\right) ,...,m\left( s_{k}\right)
-1 \\
0,\,\,\,\mbox{otherwise}%
\end{array}
\right.
\end{equation*}
where
\begin{equation*}
h_{k}=\left( \frac{1}{2^{k}\sum\limits_{j=1}^{m\left( s_{k}\right) }\left(
1/\lambda _{j}\right) }\right) ^{1/2}.
\end{equation*}

Let
\begin{equation*}
f\left( x,y\right) =\sum\limits_{k=2}^{\infty }f_{k}\left( x\right)
f_{k}\left( y\right) ,\,\,\,\,\,
\end{equation*}%
where
\begin{equation*}
\,f\left( x+l,y+s\right) =f\left( x,y\right) ,\,\,\,\,\,\,\,\,l,s=0,\pm
1,\pm 2,....
\end{equation*}

First we prove that $f\in \Lambda ^{\#}BV.$ For every choice of
nonoverlapping intervals $\{I_{n}:n\geq 1\}$, we get
\begin{eqnarray*}
&&\Lambda ^{\#}V_{1}\left( f;p\left( n\right) \uparrow \infty \right) \leq
\sum\limits_{j=1}^{\infty }\frac{\left\vert f\left( I_{j},y_{j}\right)
\right\vert }{\lambda _{j}} \\
&\leq &4\sum\limits_{i=1}^{\infty }h_{i}^{2}\sum\limits_{j=1}^{m(s_{i})}%
\frac{1}{\lambda _{j}}=4\sum\limits_{i=1}^{\infty }\frac{1}{2^{i}}=4.
\end{eqnarray*}

Analogously, we can prove that
\begin{equation*}
\Lambda ^{\#}V_{2}\left( f;p\left( n\right) \uparrow \infty \right) \leq 4.
\end{equation*}

Next, we shall prove that $f\notin BV^{\#}\left( p\left( n\right) \uparrow
\infty \right) .$ By (\ref{6}), (\ref{7}) and from the construction of the
function we get
\begin{eqnarray*}
&&V_{1}\left( f;p\left( n\right) \uparrow \infty \right) \\
&\geq &\left\{ \sum\limits_{j=m\left( s_{k-1}\right) }^{m\left( s_{k}\right)
-1}\left\vert f\left( \frac{2j-1}{2^{s_{k}}},\frac{2j}{2^{s_{k}}}\right)
-f\left( \frac{2j}{2^{s_{k}}},\frac{2j}{2^{s_{k}}}\right) \right\vert
^{p\left( s_{k}\right) }\right\} ^{1/p\left( s_{k}\right) } \\
&=&\left\{ \sum\limits_{j=m\left( s_{k-1}\right) }^{m\left( s_{k}\right)
-1}\left\vert \left( f_{k}\left( \frac{2j-1}{2^{s_{k}}}\right) -f_{k}\left(
\frac{2j}{2^{s_{k}}}\right) \right) f_{k}\left( \frac{2j}{2^{s_{k}}}\right)
\right\vert ^{p\left( s_{k}\right) }\right\} ^{1/p\left( s_{k}\right) } \\
&=&h_{k}^{2}\left( m\left( s_{k}\right) -m\left( s_{k-1}\right) \right)
^{1/p\left( s_{k}\right) } \\
&\geq &c\frac{m\left( s_{k}\right) ^{1/p\left( s_{k}\right) }}{%
2^{k}\sum\limits_{j=1}^{m\left( s_{k}\right) }\left( 1/\lambda _{j}\right) }%
\geq c2^{k}\rightarrow \infty \,\,\,\,\,\,\,\,\,\,%
\mbox{as\thinspace
\thinspace \thinspace \thinspace \thinspace \thinspace }k\rightarrow \infty .
\end{eqnarray*}%
Therefore, we get $f\notin BV^{\#}\left( p\left( n\right) \uparrow \infty
\right) .$

b)\thinspace \thinspace \thinspace Let
\begin{equation*}
2^{n_{k}-n_{k-1}-1}<m\left( n_{k}\right) \leq 2^{n_{k}}\,\,\,\,\,\,\,\,%
\mbox{for}\,\,\mbox{all\thinspace }\,\,\,\,\,k>k_{0}.
\end{equation*}
Consider the function $g_{k}$ defined by
\begin{equation*}
g_{k}\left( x\right) =\left\{
\begin{array}{l}
d_{k}\left( 2^{n_{k}}x-2j+1\right) ,x\in \left[ \left( 2j-1\right)
/2^{n_{k}},2j/2^{n_{k}}\right) \\
-d_{k}\left( 2^{n_{k}}x-2j-1\right) ,x\in \left[ 2j/2^{n_{k}},\left(
2j+1\right) /2^{n_{k}}\right) \\
\,\,\,\,\,\,\,\,\,\,\,\,\,\,\,\,\,\,\,\,\,\,\,\,\,\,\,\,\,\,\,\,\,\,\,\,\,\,%
\,\,\,\,\,\,\,\,\,\,\,\,\mbox{for\thinspace }%
j=2^{n_{k-1}-n_{k-2}},...,2^{n_{k}-n_{k-1}-1}-1 \\
0,\,\,\,\mbox{otherwise}%
\end{array}
\right.
\end{equation*}
where
\begin{equation*}
d_{k}=\left( \frac{1}{2^{k}\sum\limits_{j=1}^{m\left( n_{k}\right) }\left(
1/\lambda _{j}\right) }\right) ^{1/2}.
\end{equation*}

Let
\begin{equation*}
g\left( x,y\right) =\sum\limits_{k=k_{0}+2}^{\infty }g_{k}\left( x\right)
g_{k}\left( y\right),
\end{equation*}%
where
\begin{equation*}
g\left( x+l,y+s\right) =g\left( x,y\right) ,\,\,\,\,\,\,\,\,\,l,s=0,\pm
1,\pm 2,....
\end{equation*}

For every choice of nonoverlapping intervals $\{I_{n}:n\geq 1\}$ we get
\begin{eqnarray*}
&&\sum\limits_{j=1}^{\infty }\frac{\left\vert f\left( I_{j},y_{j}\right)
\right\vert }{\lambda _{j}} \\
&\leq &4\sum\limits_{i=k_{0}+1}^{\infty
}d_{i}^{2}\sum\limits_{j=1}^{2^{n_{i}-n_{i-1}-1}}\frac{1}{\lambda _{j}} \\
&\leq &4\sum\limits_{i=k_{0}+1}^{\infty }d_{i}^{2}\sum\limits_{j=1}^{m\left(
n_{i}\right) }\frac{1}{\lambda _{j}}<\infty .
\end{eqnarray*}%
Analogously, we can prove that
\begin{equation*}
\sum\limits_{j=1}^{\infty }\frac{\left\vert f\left( x_{j},J_{j}\right)
\right\vert }{\lambda _{j}}<\infty .
\end{equation*}

Hence we have $g\in \Lambda ^{\#}BV.$

Next we shall prove that $g\notin BV^{\#}\left( p\left( n\right) \uparrow
\infty \right) .$ By (\ref{3}), (\ref{5}), (\ref{6}) and from the
construction of the function we get
\begin{eqnarray*}
&&V_{1}^{\#}\left( g;p\left( n\right) \uparrow \infty \right) \\
&\geq &\left\{
\sum\limits_{j=2^{n_{k-1}-n_{k-2}}}^{2^{n_{k}-n_{k-1}-1}-1}\left\vert
g\left( \frac{2j-1}{2^{n_{k}}},\frac{2j}{2^{n_{k}}}\right) -g\left( \frac{2j%
}{2^{n_{k}}},\frac{2j}{2^{n_{k}}}\right) \right\vert ^{p\left( n_{k}\right)
}\right\} ^{1/p\left( n_{k}\right) } \\
&=&\left\{
\sum\limits_{j=2^{n_{k-1}-n_{k-2}}}^{2^{n_{k}-n_{k-1}-1}-1}\left\vert \left(
g_{k}\left( \frac{2j-1}{2^{n_{k}}}\right) -g_{k}\left( \frac{2j}{2^{n_{k}}}%
\right) \right) g_{k}\left( \frac{2j}{2^{n_{k}}}\right) \right\vert
^{p\left( n_{k}\right) }\right\} ^{1/p\left( n_{k}\right) } \\
&=&d_{k}^{2}\left( 2^{n_{k}-n_{k-1}-1}-2^{n_{k-1}-n_{k-2}}\right)
^{1/p\left( n_{k}\right) } \\
&\geq &\frac{1}{4}d_{k}^{2}2^{\left( n_{k}-n_{k-1}\right) /p\left(
n_{k}\right) } \\
&\geq &\frac{c2^{n_{k}/p\left( n_{k}\right) }}{2^{k+2}\sum\limits_{j=1}^{m%
\left( n_{k}\right) }\left( 1/\lambda _{j}\right) } \\
&\geq &c\frac{m\left( n_{k}\right) ^{1/p\left( n_{k}\right) }}{%
2^{k}\sum\limits_{j=1}^{m\left( n_{k}\right) }\left( 1/\lambda _{j}\right) }%
\,\,\,\, \\
&\geq &c2^{k}\,\rightarrow \infty \,\,\,\,\,%
\mbox{as\thinspace \thinspace
\thinspace \thinspace \thinspace \thinspace }k\rightarrow \infty .
\end{eqnarray*}

Therefore, we get $g\notin BV^{\#}\left( p\left( n\right) \uparrow \infty
\right) $ and the proof of Theorem 1 is complete.
\end{proof}

\begin{proof}[Proof of Theorem \protect\ref{th2}]
We choose a monotone increasing sequence of positive integers $\{l_{k}:k\geq
1\}$ such that $l_{1}=1$ and
\begin{equation}
p\left( l_{k-1}\right) \geq \ln k\,\,\,\,\,\,\,\,%
\mbox{for all\thinspace
\thinspace \thinspace \thinspace \thinspace \thinspace \thinspace }k\geq 2.
\label{8}
\end{equation}

Set $\left( k=1,2,...\right) $
\begin{equation*}
r_{k}\left( x\right) =\left\{
\begin{array}{l}
2^{l_{k}+1}c_{k}\left( x-1/2^{l_{k}}\right) ,\,\,%
\mbox{if\thinspace
\thinspace \thinspace }1/2^{l_{k}}\leq x\leq 3/2^{l_{k}+1} \\
-2^{l_{k}+1}c_{k}\left( x-1/2^{l_{k}-1}\right) ,\,\,%
\mbox{if\thinspace
\thinspace \thinspace }3/2^{l_{k}+1}\leq x\leq 1/2^{l_{k}-1} \\
\,\,0,\,\,\,\mbox{otherwise}\,\,\,\,\,\,\,\,\,\,\,\,\,\,\,%
\end{array}%
\right.
\end{equation*}%
where
\begin{equation*}
c_{k}=\left( \sum\limits_{j=1}^{k}\frac{1}{\lambda _{j}}\right) ^{-1/4}
\end{equation*}%
and
\begin{equation*}
r\left( x,y\right) =\sum\limits_{k=1}^{\infty }r_{k}\left( x\right)
r_{k}\left( y\right),
\end{equation*}%
where
\begin{equation*}
r\left( x+l,y+s\right) =r\left( x,y\right) \text{ \qquad }l,s=0,\pm 1,\pm
2,....
\end{equation*}

It is easy to show that function $r\in C\left( I^{2}\right) $.

First we show that $r\in BV^{\#}\left( p\left( n\right) \uparrow \infty
\right) .$ Let $\left\{ I_{i}\right\} $ be an arbitrary partition of the
interval $I$ such that $\inf\limits_{i}\left\vert I_{i}\right\vert \geq
1/2^{l}.$ For this fixed $l$, we can choose integers $l_{k-1}$ and $l_{k}$
for which $l_{k-1}\leq l<l_{k}$ holds. Then it follows that $p\left(
l_{k-1}\right) \leq p\left( l\right) \leq p\left( l_{k}\right) $ and $%
1/2^{l_{k}}<1/2^{l}\leq 1/2^{l_{k-1}}.$

By (\ref{8}) and from the construction of the function $r$ we obtain
\begin{eqnarray*}
&&\left\{ \sum\limits_{j=1}^{m}\left\vert r\left( I_{i},y_{i}\right)
\right\vert ^{p\left( l\right) }\right\} ^{1/p\left( l\right) } \\
&=&\left\{ \sum\limits_{j=1}^{k}\sum\limits_{\left\{ i:2^{-l_{j}}\leq
y_{i}<2^{-l_{j}+1}\right\} }\left\vert r\left( I_{i},y_{i}\right)
\right\vert ^{p\left( l\right) }\right\} ^{1/p\left( l\right) } \\
&\leq &\left\{ \sum\limits_{j=1}^{k}\left( \sum\limits_{\underset{I_{i}\cap
\left( 2^{-l_{j}},2^{-l_{j}+1}\right) \neq \varnothing }{\left\{
i:2^{-l_{j}}\leq y_{i}<2^{-l_{j}+1}\right\} }}\left\vert r\left(
I_{i},y_{i}\right) \right\vert \right) ^{p\left( l\right) }\right\}
^{1/p\left( l\right) }
\end{eqnarray*}%
\begin{eqnarray*}
&\leq &\left\{ \sum\limits_{j=1}^{k}\left( \sum\limits_{\left\{ i:I_{i}\cap
\left( 2^{-l_{j}},2^{-l_{j}+1}\right) \neq \varnothing \right\} }\left\vert
r\left( I_{i},\frac{3}{2^{l_{j}+1}}\right) \right\vert \right) ^{p\left(
l\right) }\right\} ^{1/p\left( l\right) } \\
&\leq &\left\{ \sum\limits_{j=1}^{k}\left( \left\vert r\left( \left( \frac{1%
}{2^{l_{j}}},\frac{3}{2^{l_{j}+1}}\right) ,\frac{3}{2^{l_{j}+1}}\right)
\right\vert +\left\vert r\left( \left( \frac{3}{2^{l_{j}+1}},\frac{1}{%
2^{l_{j}-1}}\right) ,\frac{3}{2^{l_{j}+1}}\right) \right\vert \right)
^{p\left( l\right) }\right\} ^{1/p\left( l\right) } \\
&\leq &\left\{ \sum\limits_{j=1}^{k}\left( 2c_{j}^{2}\right) ^{p\left(
l\right) }\right\} ^{1/p\left( l\right) }\leq 2k^{1/p\left( l_{k-1}\right)
}\leq 4k^{1/\ln k}=4e.
\end{eqnarray*}
Therefore $r\in BV^{\#}\left( p\left( n\right) \uparrow \infty \right) $
holds.

Finaly, we prove that $r\notin \Lambda BV^{\#}.$ Since $c_{n}\downarrow 0,$
we get
\begin{eqnarray*}
&&\sum\limits_{j=1}^{k}\frac{\left\vert r\left(
1/2^{l_{j}},3/2^{l_{j}+1}\right) -r\left( 3/2^{l_{j}+1},3/2^{l_{j}+1}\right)
\right\vert }{\lambda _{j}} \\
= &&\sum\limits_{j=1}^{k}\frac{\left\vert \left( r_{j}\left(
1/2^{l_{j}}\right) -r_{j}\left( 3/2^{l_{j}+1}\right) \right) r_{j}\left(
3/2^{l_{j}+1}\right) \right\vert }{\lambda _{j}} \\
&=&\sum\limits_{j=1}^{k}\frac{c_{j}^{2}}{\lambda _{j}}\geq
c_{k}^{2}\sum\limits_{j=1}^{k}\frac{1}{\lambda _{j}} \\
&=&\left( \sum\limits_{j=1}^{k}\frac{1}{\lambda _{j}}\right)
^{1/2}\rightarrow \infty \,\,\,\,\,%
\mbox{as\thinspace \thinspace \thinspace \thinspace \thinspace
\thinspace }k\rightarrow \infty .
\end{eqnarray*}
Therefore, we get $r\notin \Lambda BV^{\#}$ and the proof of Theorem \ref{th2}
is complete.
\end{proof}

Since $\Lambda BV^{\#}=B\left( I^{2}\right) $ if and only if $%
\sum\limits_{j=1}^{\infty }\left( 1/\lambda _{j}\right) <\infty $ the
validity of Corollary \ref{cor} follows from Theorem \ref{th2}.

{\bf Acknowledge }. We thanks to the anonymous referee for his/her remarks which have improved the final version of this paper.

\end{document}